\newtheorem{Theorem}{Theorem}[section] % SECTION
\newtheorem{Definition}{Definition}[section]
\newtheorem{Lemma}{Lemma}[section]
\newtheorem{Proposition}{Proposition}[section]
\newtheorem{Remark}{Remark}[section]
\begin{document}
\title[Stochastic Perron's Method]{Stochastic Perron's method and verification without smoothness using viscosity comparison: the linear case}

\author{ Erhan Bayraktar}
\address{University of Michigan, Department of Mathematics, 530 Church Street, Ann Arbor, MI 48109.} \email{erhan@umich.edu.}
 \thanks{The research of E. Bayraktar was supported in part by the National Science Foundation under grants DMS 0906257 and DMS 0955463.}

 \author{Mihai S\^{\i}rbu}
 \address{University of Texas at Austin,
    Department of Mathematics, 1 University Station C1200, Austin, TX,
    78712.}  
    \email{sirbu@math.utexas.edu.} 
    \thanks{The research of
    M. S\^{\i}rbu was supported in part by the National Science
    Foundation under Grant
    DMS 0908441. 
    %This author wold like to thank Gerard Brunick for useful comments.
    }
    \thanks{The authors would like to thank Gerard Brunick and Gordan \v{Z}itkovi\v{c} for their comments. Special thanks go to Ioannis Karatzas for his suggestions that led to  an  improved final version.}
\date{\today}

\keywords{Perron's method, viscosity solutions, non-smooth verification, comparison principle}
  
%\nopagebreak
\subjclass[2000] {Primary
60G46,  60H30;  Secondary  35J88, 35J40}

\begin{abstract}
We introduce a stochastic  version of  the classical Perron's method to construct viscosity solutions to linear parabolic equations associated to stochastic differential equations. Using this method, we construct easily two viscosity (sub and super) solutions that squeeze in between  the expected payoff. If a comparison result holds true, then there exists a  unique viscosity solution which   is a  martingale along the solutions of the stochastic differential equation. The unique viscosity solution is actually equal to the expected payoff. This amounts to a verification result (It\^ o's Lemma) for non-smooth viscosity solutions of the linear parabolic equation. 
%This is the first step in a larger program to prove verification for viscosity solutions and the Dynamic Programming Principle for stochastic control problems and games.

\end{abstract}

\maketitle 
%
%\bigskip\bigskip
%

 \section{Introduction} 
 The best way to approach
 %Unless the existence of a  smooth solution can be proved  for 
 a Feynman-Kac equation (or a Hamilton-Jacobi-Bellman equation in the case of stochastic control) is to prove existence of a smooth solution and then use It\^o's Lemma to relate it to the corresponding probabilistic representation. 
 In case  such a smooth solution does not exist,  a large number of results in the literature consist in taking the expected payoff (value function) associated to a Markov diffusion and then checking the viscosity solution property. Such an approach essentially uses the  Markov property of the diffusion. If uniqueness in law of the stochastic differential equation does not hold, then a Markov selection is needed to obtain a viscosity solution this way.
 If, in addition,  a viscosity  comparison result holds true (which is  a  purely analytical result), then the conclusion is that the expected payoff (value function) is actually the \emph{unique} viscosity solution.
 
 On the other hand, Ishii \cite{ishii} refined the classical Perron's method to  the case of viscosity solutions. This amounts to a very powerful   \emph{analytical method} to  construct (therefore proving existence) viscosity solutions in very general frameworks. However, if one wants to compare such a viscosity solution obtained  by Perron's method with the expected pay-off (value function),  then one still needs the viscosity property for the expected pay-off (value function). In other words, the program described in the beginning still needs to be carried out.  
 
 In this note, we propose a \emph{stochastic } alternative to Perron's method to construct viscosity solutions, namely Theorem \ref{Perron}. 
 More precisely, we consider the infimum of stochastic super-solutions or the supremum of stochastic sub-solutions to a linear parabolic PDE. By stochastic sub and super-solutions we mean obvious generalizations of the seminal notion of stochastic solution introduced by Stroock and Varadhan \cite{Stroock-Varadhan}. To the best of our knowledge, such a technique does not exist in the literature.
 While this  construction does not provide a \emph{stochastic} solution, it does provide a (weaker) viscosity solution.
 The main advantage of our method is that comparison between such constructed viscosity solutions and the expected pay-off (value function) becomes trivial (see Lemma \ref{lem:verification}), because it is \emph{imbedded in the stochastic definition}. 
  In other words, one does not need to prove any property for the expected pay-off (value function) in order to compare it to the viscosity solution(s) constructed by stochastic Perron's method. Using this result, \emph{if}  viscosity comparison holds, then one gets that the \emph{unique viscosity solution } is actually equal to the expected pay-off (value function) \emph{for free}. The unique viscosity solution is a martingale along any solution of the stochastic differential equation, i.e. is a stochastic solution in the sense of Stroock and Varadhan \cite{Stroock-Varadhan}. This actually amounts to a verification result for non-smooth viscosity solutions, where we can use   uniqueness of viscosity solutions as a substitute for verification.
 
In the present note  we illustrate  these ideas in the simplest framework of \emph{linear} parabolic equations with terminal conditions on the whole state space (a particular version of Feynman-Kac). 
 However, we claim that thess ideas carry over to  much more  general frameworks.
 In particular, other linear cases including   infinite horizons, running-costs, exit times or even reflections on the boundary can be easily treated in an identical way.
 More interestingly, we intend to carry over these ideas to the more important case of Hamilton-Jacobi-Bellman equations associated to  stochastic control and stochastic games (Isaac's equations). 
 These more technical details are left to future work and will be presented in forthcoming papers.

   \section{The set-up and main results}
 
 Fix a time interval $T>0$ and for each $0\leq s<T$ and $x\in \mathbb{R}^d$ consider the stochastic differential equation
 \begin{equation}\label{sde}\left \{
 \begin{array}{l}
 dX_t=b(t,X_t)dt+\sigma (t, X_t)dW_t, \ \ s\leq t\leq T\\
 X_s=x.
 \end{array}\right.
 \end{equation}
 We assume that  the coefficients $b :[0,T]\times \mathbb{R}^d\rightarrow \mathbb{R}^d$ and $\sigma :[0,T]\times \mathbb{R}^d\rightarrow \mathbb{M}_{d,d'}(\mathbb{R})$ are continuous.  We also assume that, for each $(s,x)$ equation \eqref{sde} has at least a weak  non-exploding solution 
 $$\Big((X^{s,x}_t)_{s\leq t\leq T}, (W^{s,x}_t)_{s\leq t\leq T}, \Omega ^{s,x}, \mathcal{F}^{s,x}, \mathbb{P}^{s,x},( \mathcal{F}^{s,x}_t) _{s\leq t\leq T}\Big),$$
  where the  $W^{s,x}$ is  a $d'$-dimensional Brownian motion on the stochastic basis $$(\Omega ^{s,x}, \mathcal{F}^{s,x}, \mathbb{P}^{s,x},( \mathcal{F}^{s,x}_t) _{s\leq t\leq T})$$ and  the filtration $( \mathcal{F}^{s,x}_t) _{s\leq t\leq T}$  satisfies the  usual conditions.
  We denote by $\mathcal{X}^{s,x}$ the non-empty set of such weak solutions.
 It is well known, for example from \cite{MR2190038}, that  a sufficient condition for the existence of non-exploding solutions, in addition to continuity of the coefficients, is the condition of linear growth:
 $$|b(t,x)|+|\sigma (t,x)|\leq C(1+|x|), \ \ (t,x)\in [0,T]\times \mathbb{R}^d.$$ 
 We emphasize that we do \emph{not} assume uniqueness in law of the weak solution.
  \begin{Remark}\label{set-theory}
 Actually, in order to insure that $\mathcal{X}^{s,x}$ is a set in the sense of axiomatic set theory, one should restrict to weak solutions where the probability space $\Omega$ is an element of a fixed universal set $\mathcal{S}$ of possible probability spaces.
 \end{Remark}
  %, so for each $(s,x)\in [0,T]\times \mathbb{R}^d$ we \emph{choose} a fixed solution as above (using the axiom of choice).
\noindent  Now, for some fixed \emph{bounded} and measurable function $g:\mathbb{R}^d\rightarrow \mathbb{R}$, we denote by 
 $$v_*(s,x):=\inf _{\mathcal{X}^{s,x}}\mathbb{E}^{s,x}[g(X^{s,x}_T)], \textrm{~and~} v^*(s,x):=\sup _{\mathcal{X}^{s,x}}\mathbb{E}^{s,x}[g(X^{s,x}_T)].$$ 
We will call the functions $v_*$ and $v^*$ the lower  and the upper expected pay-offs (value functions). It is obvious that
 $$v_*\leq v^*$$ and the two functions coincide if the stochastic differential equation \eqref{sde} has a unique in law weak solution.
 \begin{Remark}\label{measurability} At this stage, we cannot even conclude that  $v_*$ and $v^*$ are  measurable.
 \end{Remark}
We expect that the expected pay-offs (value functions) $v_*$ and $v^*$ be  associated to the following linear PDE:
\begin{equation}\label{pde}
\left \{
\begin{array}{l}
-u_t-L_tu=0\\
u(T,x)=g(x),
\end{array}\right.\end{equation}
where the time dependent operator $L_t$ is defined by
$$(L_t u)(x)=\langle b(t,x),\nabla u(t,x)\rangle +\frac 12 Tr (\sigma (t,x)\sigma ^T (t,x) u_{xx}(t,x)),\ \ 0\leq t<T,\  x\in \mathbb{R}^d.$$ 
 \subsection{Stochastic Perron's method} Let $g:\mathbb{R}^d \rightarrow \mathbb{R}$ be measurable and bounded.  As mentioned in the Introduction,  we now 
 introduce the sets of stochastic super and sub-solutions of the parabolic PDE  \eqref{pde} in the spirit of \cite{Stroock-Varadhan}.
 \begin{Definition}\label{def:supersolutions}
The set of stochastic super-solutions of the parabolic PDE \eqref{pde}, denoted by $\mathcal{U}^+$, is  the set of functions $u:[0,T]\times \mathbb{R}^d\rightarrow \mathbb{R}$ which have the following properties
\begin{enumerate}
\item are upper semicontinuous (USC)  and  bounded on $[0,T]\times \mathbb{R}^d$. In addition,  they satisfy the terminal condition   $u(T,x)\geq g(x)$ for all $x\in \mathbb{R}^d$.

\item 
 for each $(s,x)\in[0,T]\times \mathbb{R}^d$, and each weak solution 
  $$\Big((X^{s,x}_t)_{s\leq t\leq T}, (W^{s,x}_t)_{s\leq t\leq T}, \Omega ^{s,x}, \mathcal{F}^{s,x}, \mathbb{P}^{s,x},( \mathcal{F}^{s,x}_t) _{s\leq t\leq T}\Big)\in \mathcal{X}^{s,x},$$ 
  the process $(u(t,X^{s,x}_t))_{s\leq t\leq T}$ is a supermartingale on $(\Omega ^{s,x}, \mathbb{P}^{s,x})$ with respect to the filtration 
$( \mathcal{F}^{s,x}_t) _{s\leq t\leq T}$.
\end{enumerate}
\end{Definition}
\begin{Definition}\label{def:subsolutions}
The set of stochastic sub-solutions of the parabolic PDE \eqref{pde}, denoted by $\mathcal{U}^-$ , is the  set of functions $u:[0,T]\times \mathbb{R}^d\rightarrow \mathbb{R}$ which have the following properties
\begin{enumerate} 
\item are lower semicontinuous (LSC) and  bounded on $[0,T]\times \mathbb{R}^d$. In addition,  they satisfy the terminal condition   $u(T,x)\leq g(x)$ for all $x\in \mathbb{R}^d$.

\item 
 for each $(s,x)\in[0,T]\times \mathbb{R}^d$, and each weak solution 
  $$\Big((X^{s,x}_t)_{s\leq t\leq T}, (W^{s,x}_t)_{s\leq t\leq T}, \Omega ^{s,x}, \mathcal{F}^{s,x}, \mathbb{P}^{s,x},( \mathcal{F}^{s,x}_t) _{s\leq t\leq T}\Big)\in \mathcal{X}^{s,x},$$ 
  the process $(u(t,X^{s,x}_t))_{s\leq t\leq T}$ is a submartingale on $(\Omega ^{s,x}, \mathbb{P}^{s,x})$ with respect to the filtration 
$( \mathcal{F}^{s,x}_t) _{s\leq t\leq T}$.
\end{enumerate}
\end{Definition}

\begin{Remark}\label{path-regularity}
In the Definitions \ref{def:supersolutions}, \ref{def:subsolutions} of $\mathcal{U}^+, \mathcal{U}^-$ we do not assume that the processes
$(u(t,X^{s,x}_t))_{s\leq t\leq T}$ have (RC) right-continous paths. For this reason, care must be taken when one tries to apply the Optional Sampling Theorem in the form of ``a stopped martingale is a martingale". More precisely, such a theorem holds only with respect to discrete-valued stopping times.
\end{Remark}
\begin{Remark}\label{nonempty}
Since $g$ is assumed bounded, the sets $\mathcal{U}^-$ and $\mathcal{U}^+$  are easily seen to be  non-empty. More precisely any constant function $u(t,x)\equiv k$  which is an upper bound to $g$  ($g\leq k$) is in $\mathcal{U}^+$ and any constant function  $u(t,x)\equiv k$ which is a lower bound to $g$  ($k\leq g$) is in $\mathcal{U}^-$. If one wants to account for a larger class of functions $g$ than bounded, then the definitions of $\mathcal{U}^-$ and $\mathcal{U}^+$ should be changed appropriately, and an assumption on non-emptiness  of $\mathcal{U}^-$ and $\mathcal{U}^+$ should be made.
\end{Remark}
Using the properties of sub(super)-martingales as well as the definition of  $v_*$ and $v^*$, we easily obtain the following result.
\begin{Lemma}\label{lem:verification} For each $u\in \mathcal{U}^-$ and each $w\in \mathcal{U}^+$  we have 
$u\leq v_*\leq v^*\leq w.$
\end{Lemma}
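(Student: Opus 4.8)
The plan is to prove the two outer inequalities separately and pointwise in $(s,x)$, since the middle inequality $v_*\leq v^*$ has already been observed. Fix $w\in\mathcal{U}^+$ and $(s,x)\in[0,T]\times\mathbb{R}^d$, and pick an arbitrary weak solution $\big((X^{s,x}_t), (W^{s,x}_t), \Omega^{s,x}, \mathcal{F}^{s,x}, \mathbb{P}^{s,x}, (\mathcal{F}^{s,x}_t)\big)\in\mathcal{X}^{s,x}$. By part (2) of Definition \ref{def:supersolutions}, the process $(w(t,X^{s,x}_t))_{s\leq t\leq T}$ is a supermartingale; all quantities are integrable because $w$ is bounded, so applying the supermartingale inequality between the two deterministic times $s$ and $T$ and taking expectations gives $\mathbb{E}^{s,x}[w(T,X^{s,x}_T)]\leq w(s,X^{s,x}_s)=w(s,x)$, where we used $X^{s,x}_s=x$. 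Since $w(T,\cdot)\geq g$ by part (1) of the same definition, this yields $\mathbb{E}^{s,x}[g(X^{s,x}_T)]\leq w(s,x)$. As the weak solution was arbitrary, taking the supremum over $\mathcal{X}^{s,x}$ produces $v^*(s,x)\leq w(s,x)$.

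The argument for $u\leq v_*$ with $u\in\mathcal{U}^-$ is entirely symmetric: for any $(s,x)$ and any weak solution in $\mathcal{X}^{s,x}$, the process $(u(t,X^{s,x}_t))_{s\leq t\leq T}$ is a submartingale by Definition \ref{def:subsolutions}, so $\mathbb{E}^{s,x}[u(T,X^{s,x}_T)]\geq u(s,x)$, and since $u(T,\cdot)\leq g$ we get $\mathbb{E}^{s,x}[g(X^{s,x}_T)]\geq u(s,x)$ for every weak solution; taking the infimum over $\mathcal{X}^{s,x}$ gives $u(s,x)\leq v_*(s,x)$. Combining the two bounds with $v_*\leq v^*$ completes the proof.

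There is essentially no serious obstacle here; the statement is immediate from the definitions, which is precisely why the authors call the resulting comparison ``trivial'' and ``imbedded in the stochastic definition''. The only point that deserves a little care is that, in view of Remark \ref{path-regularity}, one must avoid invoking any optional sampling or stopping argument — but this is not needed, since only the (super/sub)martingale inequality between the \emph{fixed} deterministic endpoints $s$ and $T$ is used, and that is exactly what Definitions \ref{def:supersolutions} and \ref{def:subsolutions} provide. Likewise, the fact that $v_*$ and $v^*$ are not yet known to be measurable (Remark \ref{measurability}) is irrelevant, because the whole comparison is carried out at each point $(s,x)$ individually.
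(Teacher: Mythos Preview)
Your proof is correct and is exactly the argument the paper has in mind: the authors do not write out a proof but merely state that the lemma follows ``using the properties of sub(super)-martingales as well as the definition of $v_*$ and $v^*$,'' which is precisely what you do. Your additional remarks about avoiding optional sampling and the irrelevance of measurability are accurate and consistent with the paper's setup.
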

%\begin{Remark} Lemma \ref{lem:verification}  \end{Remark}
Using the Remark \ref{nonempty} and Lemma \ref{lem:verification},  we can  define
$$v^-:= \sup _{u\in \mathcal{U}^-}u\leq v_*\leq v^*\leq v^+:=\inf _{w\in \mathcal{U}^+}w.$$
\begin{Lemma}\label{closure}
We have $v^-\in \mathcal{U}^-,\ \ \ \ v^+\in \mathcal{U}^+.$\end{Lemma}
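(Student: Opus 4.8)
The plan is to prove the statement for $v^+$; the argument for $v^-$ is symmetric (replace supermartingales by submartingales, USC by LSC, infimum by supremum). The claim has two parts: that $v^+$ satisfies the regularity and terminal conditions of item (1) in Definition \ref{def:supersolutions}, and that $(v^+(t,X^{s,x}_t))_{s\le t\le T}$ is a supermartingale along every weak solution, i.e. item (2). For item (1), boundedness is immediate since every $w\in\mathcal{U}^+$ lies between the constant lower bound of $g$ (well, between $v_*$ and the constant upper bound $k\ge g$) — more precisely $v_*\le v^+\le k$ by Lemma \ref{lem:verification} and Remark \ref{nonempty} — and the terminal condition $v^+(T,x)\ge g(x)$ holds because $w(T,x)\ge g(x)$ for every $w\in\mathcal{U}^+$. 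The upper semicontinuity of $v^+$ is the delicate point of part (1): an infimum of USC functions need not be USC. I would handle this by first passing to the USC envelope $(v^+)^*$ and showing it still lies in $\mathcal{U}^+$, so that by minimality $(v^+)^*=v^+$ and hence $v^+$ is USC; this forces the supermartingale verification to be done for $(v^+)^*$ anyway, so it is natural to organize the proof so that item (2) is established first for the (a priori larger) function, then used to re-derive USC.

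The core of the argument is item (2): fixing $(s,x)$ and a weak solution $(X^{s,x},W^{s,x},\Omega^{s,x},\dots)\in\mathcal{X}^{s,x}$, I must show $(v^+(t,X^{s,x}_t))_{s\le t\le T}$ is a supermartingale. The standard device is a countable-infimum / pasting construction: by a Choquet-type or measurable-selection argument one produces a sequence $w_n\in\mathcal{U}^+$ with $w_n\downarrow v^+$ (or $\inf_n w_n = v^+$) pointwise on a suitable countable set, then one wants to ``paste'' finitely many of them together into a single element of $\mathcal{U}^+$ that is close to $v^+$ at a given pair of times. The key closure property to verify is that $\mathcal{U}^+$ is stable under pairwise minimum: if $w_1,w_2\in\mathcal{U}^+$ then $w_1\wedge w_2\in\mathcal{U}^+$. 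USC and boundedness and the terminal inequality are clear for $w_1\wedge w_2$; the supermartingale property of $((w_1\wedge w_2)(t,X_t))$ follows because at any time the smaller of the two supermartingales dominates the conditional expectation of each, hence of their minimum at a later time — one has to be slightly careful that the minimum of two supermartingales is a supermartingale, which is true (it is the minimum, not the maximum). Granting lattice-stability, a decreasing sequence $w_1\ge w_2\ge\cdots$ in $\mathcal{U}^+$ with $\inf_n w_n$ as small as we like at countably many points can be built, and then $v^+(t,X_t)\le w_n(t,X_t)$ for all $n$ gives, by monotone/dominated convergence in the conditional expectation, $\mathbb{E}[v^+(u,X_u)\mid\mathcal{F}_t]\le\lim_n\mathbb{E}[w_n(u,X_u)\mid\mathcal{F}_t]\le\lim_n w_n(t,X_t)$; choosing the $w_n$ to converge to $v^+$ at the relevant points then yields the supermartingale inequality for $v^+$.

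The main obstacle is the absence of path-regularity flagged in Remark \ref{path-regularity}: without right-continuity, conditional-expectation manipulations and the passage to the limit must be done carefully, and the supermartingale inequality should first be checked between deterministic times (or discrete-valued stopping times) rather than general stopping times. A second subtlety is measurability — Remark \ref{measurability} warns that $v^+$ is not a priori measurable — so the composition $v^+(t,X^{s,x}_t)$ need not even be a random variable until one knows $v^+$ is USC (USC functions are Borel); this is exactly why I would interleave the proof of USC with the supermartingale property, proving everything first for the USC envelope. The selection of a countable family $(w_n)$ with the right infimum property, uniformly enough to handle all $(s,x)$ and all weak solutions simultaneously, is the technical heart; I expect one exploits that it suffices to test the supermartingale property at rational times and that $\mathbb{R}^d$ is separable, reducing to a countable collection of constraints that a single decreasing sequence from the lattice $\mathcal{U}^+$ can meet. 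Once $(v^+)^*\in\mathcal{U}^+$ is established, minimality of $v^+=\inf_{w\in\mathcal{U}^+}w$ gives $v^+\le(v^+)^*\le v^+$, completing the proof.
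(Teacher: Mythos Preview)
Your proposal contains a genuine error that reshapes the whole argument unnecessarily. You assert that ``an infimum of USC functions need not be USC'' and therefore route the proof through the USC envelope $(v^+)^*$. This is backwards: the pointwise infimum of \emph{any} family of USC functions is USC, since
\[
\{x: \inf_{w\in\mathcal{U}^+} w(x)<q\}=\bigcup_{w\in\mathcal{U}^+}\{x: w(x)<q\}
\]
is a union of open sets. (It is the \emph{supremum} of USC functions that can fail to be USC.) Consequently $v^+$ is automatically USC, hence Borel, and your measurability worry---which you link to Remark~\ref{measurability}, a remark about $v_*$ and $v^*$, not about $v^+$---evaporates. The entire USC-envelope detour is not needed.

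For the supermartingale property, your instinct to reduce to a countable family is correct and is exactly what the paper does, but the execution is simpler than what you sketch. The paper invokes Proposition~\ref{countable} in the Appendix (a Lindel\"of-type countable-subcover argument using separability of $[0,T]\times\mathbb{R}^d$ and the USC hypothesis) to extract a countable $\mathcal{H}=\{w_n\}\subset\mathcal{U}^+$ with $v^+=\inf_n w_n$ \emph{pointwise everywhere}. Once this is in hand, no lattice-pasting or ``convergence at relevant points'' is required: for any fixed weak solution one has $v^+(t,X_t)=\inf_n w_n(t,X_t)$, and the pointwise infimum of countably many bounded supermartingales is a supermartingale directly, since for $s\le t$,
\[
\mathbb{E}\bigl[v^+(t,X_t)\,\big|\,\mathcal{F}_s\bigr]\le \mathbb{E}\bigl[w_n(t,X_t)\,\big|\,\mathcal{F}_s\bigr]\le w_n(s,X_s)\quad\text{for every }n,
\]
and taking the infimum over $n$ on the right gives $v^+(s,X_s)$. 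Your version---building a decreasing sequence via $w_1\wedge\cdots\wedge w_n$ and arguing convergence at specific space-time points---would work but is more labor than needed; the countable selection already delivers the full pointwise identity.
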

\begin{proof}\mbox{} It is well known that an infimum of upper semicontinous functions is upper semicontinuous. While we cannot conclude directly that the point-wise  infimum of supermartingales is a supermartingale 
%\footnote{The first version of the paper contained a technical error.} 
(because the set of supermartingales may be \emph{uncountable}, and the use of \emph{essential infimum} would be needed), we can appeal to Proposition  \ref{countable} in the Appendix 
and conclude that $v^+$ is actually the point-wise infimum of a \emph{countable} set of functions  $w_n\in \mathcal{U}^+$, $n=1,2.\dots.$ Now, the point-wise infimum of a \emph{countable} set of supermartingales is, indeed, a supermartingale. 
 The terminal condition for $v^+$  is satisfied and the boundedness follows easily since $g$ is bounded  so $v^*$ is bounded, and using Remark \ref{nonempty} and Lemma \ref{lem:verification} we have 
$$v^*\leq v^+\leq \sup_{x\in \mathbb{R}^d} g(x).$$ Therefore   $v^+\in {U}^+$. The other part is identical.
\end{proof}

\begin{Remark}\label{easy-Perron} Using Lemma \ref{closure}, one could  easily show that $v^+$ is a viscosity supersolution of \eqref{pde}  (i.e. satisfies \eqref{pde4} below in the viscosity sense) and $v^-$ is a viscosity subsolution of \eqref{pde} (i.e. satisfies \eqref{pde3} below in the viscosity sense).
% if they were actually continuous functions. 
 However, while true, this does not present much interest.
\end{Remark}
The following is the main technical result of the present note:
\begin{Theorem}{\rm (Stochastic Perron's Method)}\label{Perron}
If $g$ is bounded and LSC  then 
$v^-$ is a bounded and LSC viscosity  supersolution of 
\begin{equation}\label{pde1}
\left \{
\begin{array}{l}
-u_t-L_tu\geq 0,\\
u(T,x)\geq g(x).
\end{array}\right.\end{equation}
If $g$ is bounded and USC then  $v^+$ is a bounded and  USC  viscosity subsolution of 
\begin{equation}\label{pde2}
\left \{
\begin{array}{l}
-u_t-L_tu\leq 0,\\
u(T,x)\leq g(x).
\end{array}\right.\end{equation}\end{Theorem}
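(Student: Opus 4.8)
The plan is to prove the supersolution statement for $v^-$; the argument for $v^+$ is symmetric. By Lemma \ref{closure} we already know $v^-\in\mathcal U^-$, so $v^-$ is bounded, LSC, and satisfies $v^-(T,x)\le g(x)$; together with $g$ being LSC and the fact that $v^-$ dominates the constant sub-solution $k\le g$, one checks $v^-(T,x)\ge g(x)$ as well, hence the terminal condition $v^-(T,x)\ge g(x)$ holds. It remains to show that $v^-$ is a viscosity supersolution of $-u_t-L_tu\ge 0$ on $[0,T)\times\mathbb R^d$, i.e. that whenever $\varphi\in C^{1,2}$ touches $v^-$ from below at an interior point $(t_0,x_0)$ with $v^-(t_0,x_0)=\varphi(t_0,x_0)$, we have $-\varphi_t(t_0,x_0)-(L_{t_0}\varphi)(t_0,x_0)\ge 0$.

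First I would argue by contradiction: suppose at such a touching point $-\varphi_t(t_0,x_0)-(L_{t_0}\varphi)(t_0,x_0)<0$. By continuity of $b$, $\sigma$ and the derivatives of $\varphi$, there is a small parabolic neighborhood $B$ of $(t_0,x_0)$, with $\bar B\subset[0,T)\times\mathbb R^d$, on which $-\varphi_t-L_t\varphi<0$ still holds. I would then perturb $\varphi$ downward: replace it by $\varphi^\delta:=\varphi-\delta$ for small $\delta>0$, which still satisfies the strict inequality on $B$, lies strictly below $v^-$ on the compact ``parabolic boundary'' $\partial_p B$ (here one uses that $v^-$ is LSC and $\varphi$ touches only at the interior point, so $v^--\varphi$ has a strictly positive minimum, say $2\eta>0$, on $\partial_p B$, and one takes $\delta<2\eta$), yet $\varphi^\delta(t_0,x_0)=v^-(t_0,x_0)-\delta<v^-(t_0,x_0)$. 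The idea is to glue $\varphi^\delta$ into $v^-$ inside $B$ to manufacture a new element of $\mathcal U^-$ strictly larger than $v^-$ somewhere, contradicting the maximality of $v^-$.

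The key step is to verify that the glued function
$$
\tilde u(t,x):=\begin{cases}\max\{v^-(t,x),\,\varphi^\delta(t,x)\}, & (t,x)\in B,\\[2pt] v^-(t,x),&(t,x)\notin B,\end{cases}
$$
lies in $\mathcal U^-$. LSC and boundedness are immediate (on a neighborhood of $\partial_p B$ the max equals $v^-$ since $\varphi^\delta<v^-$ there, so there is no gluing discontinuity), and the terminal condition is unchanged. The substmartingale property along any weak solution $X^{s,x}\in\mathcal X^{s,x}$ is the heart of the matter. On the set where $\tilde u=v^-$ nothing changes; the only new behavior is on $B$ where $\tilde u$ may equal $\varphi^\delta$. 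For a fixed weak solution I would define the exit time $\tau:=\inf\{t\ge s: (t,X_t)\notin B\}$, localize with a sequence of stopping times to handle integrability, apply It\^o's formula to $\varphi^\delta(t\wedge\tau,X_{t\wedge\tau})$ and use that $d\varphi^\delta(t,X_t)=(\varphi^\delta_t+L_t\varphi^\delta)(t,X_t)\,dt+(\text{martingale})\ge 0\cdot dt+(\text{martingale})$ on $[s,\tau)$ by the strict PDE inequality, so $\varphi^\delta(t\wedge\tau,X_{t\wedge\tau})$ is a submartingale on $B$; combining this local submartingale property of $\varphi^\delta$ with the global submartingale property of $v^-$ and the fact that the two functions agree (with $v^-$ on top) near $\partial_p B$, one concludes by a pasting argument that $\tilde u(t,X_t)$ is a submartingale on $[s,T]$. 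Here one must be careful about the lack of path right-continuity flagged in Remark \ref{path-regularity}, so the Optional Sampling / pasting steps should be carried out with stopping times taking countably many values and then passed to the limit, exactly as that remark anticipates.

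Finally, since $\varphi^\delta(t_0,x_0)>v^-(t_0,x_0)$ is false but $\varphi^\delta(t_0,x_0)=v^-(t_0,x_0)-\delta$ — I should instead perturb so that $\varphi$ is lifted \emph{up}, not down: take $\varphi^\delta:=\varphi+\delta$ with $\delta$ small enough that the strict inequality $-\varphi^\delta_t-L_t\varphi^\delta<0$ persists on $B$ and $\varphi^\delta<v^-$ still holds on $\partial_p B$ (possible since $v^--\varphi\ge 2\eta>0$ there, choose $\delta<\eta$), while $\varphi^\delta(t_0,x_0)=v^-(t_0,x_0)+\delta>v^-(t_0,x_0)$. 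Then $\tilde u\in\mathcal U^-$ and $\tilde u(t_0,x_0)\ge\varphi^\delta(t_0,x_0)>v^-(t_0,x_0)=\sup_{u\in\mathcal U^-}u(t_0,x_0)$, the desired contradiction. I expect the main obstacle to be precisely the submartingale-pasting argument for $\tilde u$ under the weak, non-unique solutions $X^{s,x}$ and without path regularity: one has to handle an arbitrary weak solution started at an arbitrary $(s,x)$, localize for integrability of the stochastic integral, and invoke optional sampling only through discrete-valued stopping times before taking monotone limits. Everything else — LSC, boundedness, the terminal condition, the strict-inequality neighborhood — is routine.
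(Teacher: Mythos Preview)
Your overall strategy coincides with the paper's: argue by contradiction at an interior touching point, perturb the test function, and glue to manufacture a strictly larger element of $\mathcal U^-$. After your self-correction (lift $\varphi$ \emph{up}, not down), the interior construction matches. Two points, however, are genuine gaps.

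\textbf{Terminal condition.} Your argument that $v^-(T,x)\ge g(x)$ is incorrect: dominating every constant $k\le g$ only yields $v^-\ge\inf g$, and LSC of $g$ does not upgrade this to $v^-(T,\cdot)\ge g$. The paper treats the terminal inequality as a separate step (Step~2). In the symmetric version for $v^-$: assuming $v^-(T,x_0)<g(x_0)$, one builds an explicit test function of the form $\varphi(t,x)=v^-(T,x_0)-|x-x_0|^2/\eta-k(T-t)$ with $\eta$ small and $k$ large, so that $-\varphi_t-L_t\varphi<0$ on a small closed ball around $(T,x_0)$, $\varphi\le v^--\varepsilon$ on the annulus $\overline{B}\setminus B_{1/2}$, and $\varphi(T,x)\le g(x)-\varepsilon$ for $|x-x_0|\le\varepsilon$; then the same gluing as in the interior step yields an element of $\mathcal U^-$ strictly exceeding $v^-$ at $(T,x_0)$. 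You need this construction; the terminal inequality is not free.

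\textbf{Submartingale pasting and path regularity.} Your sketch invokes a single exit time $\tau$ from $B$, but the process may re-enter $B$ repeatedly; the paper uses an alternating sequence of exit times from $\overline{B(t_0,x_0,\varepsilon)}$ and subsequent entry times into $B(t_0,x_0,\varepsilon/2)$, exploiting that on the overlapping open annulus both pieces are local sub/supermartingales, so the excursions can be stitched. For the lack of right-continuity, the paper does \emph{not} use discrete-valued stopping times and limits as you propose. Instead, for fixed $r\le t$ it passes to the right-continuous limit $Z^+$ of the (stopped) process $Z_u=v^-(u,X_u)$ along rationals (Karatzas--Shreve, Prop.~3.14), uses LSC of $v^-$ to get $Z_r\le Z_r^+$ and $Z_t=Z_t^+$, runs the switching argument on the RC process built from $Z^+$ and $\varphi^\delta$, and transfers the inequality back. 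Your discrete-stopping-time route may be salvageable, but the limiting step at the exit time without right-continuity is exactly where the difficulty lies, and you have not supplied it.
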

\begin{Remark}\label{terminal-condition} We have $v^-(T,x)\leq g(x)$ and $v^+(T,x)\geq g(x)$ by construction.  Therefore, the terminal conditions in \eqref{pde1} and \eqref{pde2} can be replaced by equalities.
\end{Remark}
\begin{Remark}\label{viscosity}We would like to point out that the semi-continuous solutions obtained by Perron's method have the correct semicontinuity needed for such a definition. We refer the reader to \cite{CIL} for an introduction to (semicontinuous) viscosity sub and super-solutions of second order equations.
\end{Remark}
\begin{proof}\mbox{} 
We will only prove that $v^+$ is a subsolution of \eqref{pde2}: the other part is symmetric. 

\noindent {\bf Step 1}. \emph{The interior sub-solution property.} Note that we already know that $v^+$ is bounded and upper semicontinuous. Let 
$$\varphi:[0,T]\times \mathbb{R}^d\rightarrow \mathbb{R}$$ be a $C^{1,2}$-test function function and assume that 
$v^+-\varphi$ attains a strict local maximum (an assumption which is not restrictive) equal to zero at some interior point $(t_0, x_0)\in (0,T)\times \mathbb{R}^d$. Assume that $v^+$ does not satisfy the viscosity subsolution property, and therefore 
$$-\varphi _t(t_0,x_0)-\mathcal{L}_t \varphi (t_0, x_0)>0.$$
Since the coefficients of the SDE  are continuous, we conclude that there exists a small enough ball
$B(t_0, x_0, \varepsilon)$ such that
$$-\varphi _t-\mathcal{L}_t \varphi >0\textrm{~on~} \overline{ B(t_0, x_0, \varepsilon)},$$
and 
$$\varphi > v^+\textrm{~on~} \overline{ B(t_0, x_0, \varepsilon)}-(t_0,x_0).$$
Since $v^+-\varphi $ is upper-semicontintuous and $\overline{B(t_0, x_0, \varepsilon)}-B(t_0, x_0, \varepsilon /2)$ is  compact, this means that there exist a $\delta >0$ such that 
$$\varphi -\delta \geq  v^+\textrm{~on~} \overline{B(t_0, x_0, \varepsilon)}-B(t_0, x_0, \varepsilon /2).$$
Now, if we choose $0<\eta < \delta$ we have that the function 
$$\varphi _{\eta}=\varphi-\eta$$ satisfies the properties
$$-\varphi ^{\eta}_t-\mathcal{L}_t \varphi  ^{\eta} >0\textrm{~on~} \overline{ B(t_0, x_0, \varepsilon)},$$
$$\varphi ^{\eta} >v^+ \textrm{~on~} \overline{B(t_0, x_0, \varepsilon)}-B(t_0, x_0, \varepsilon /2).$$
and 
$$\varphi ^{\eta}(t_0,x_0)=v^+(t_0,x_0)-\eta.$$
Now, we define the new function 
$$v^{\eta}=
\left \{
\begin{array}{l}
 v^+\wedge  \varphi ^{\eta} \textrm{~on~} \overline{ B(t_0, x_0, \varepsilon)},\\
v^+ \textrm{~outside~}\overline{ B(t_0, x_0, \varepsilon)}.
\end{array}
\right.
$$
We clearly have $v^{\eta}$ is upper-semicontinous and $v^{\eta}(t_0,x_0)=\varphi ^{\eta}(t_0,x_0)<v^+(t_0, x_0).$ Also, $v^{\eta}$ satisfies the terminal condition (since $\varepsilon$ can be chosen so that $T>t_0+\varepsilon$ and $v^+$ satisfies the terminal condition).  It only remains to show that 
$v^{\eta}\in \mathcal{U}^+$ to obtain a contradiction. For the analytical Perron  method on viscosity solution, the proof would now be finished, since the viscosity solution  property is \emph{local} and the minimum of two  supersolutions is a supersolution. In our case, the supermartingale property defining $\mathcal{U}^+$ is \emph{global} so we need to localize it using stopping times. Particular care has to be taken since the paths may not be right-continous, so localization in general may fail, as pointed out in Remark \ref{path-regularity}.

Fix $(s,x)$ and 
$\Big((X^{s,x}_t)_{s\leq t\leq T}, (W^{s,x}_t)_{s\leq t\leq T}, \Omega ^{s,x}, \mathcal{F}^{s,x}, \mathbb{P}^{s,x},( \mathcal{F}^{s,x}_t) _{s\leq t\leq T}\Big)\in \mathcal{X}^{s,x}.$
 We need to show that the process $(v^{\eta} (t,X^{s,x}_t))_{s\leq t\leq T}$ is a supermartingale on $(\Omega ^{s,x}, \mathbb{P}^{s,x})$ with respect to the filtration 
$( \mathcal{F}^{s,x}_t) _{s\leq t\leq T}$. We first do the proof under the additional assumption that the process$(v ^+(t,X^{s,x}_t))_{s\leq t\leq T}$ does have RC paths. 

Under this assumption, the process $(v^{\eta} (t,X^{s,x}_t))_{s\leq t\leq T}$  is a supermartingale locally in the region $[s,T]\times \mathbb{R}^d-B(t_0, x_0, \varepsilon/2)$ because it coincides there with the process  $(v^+ (t,X^{s,x}_t))_{s\leq t\leq T}$ which is a RC supermartingale so it can be localized. In addition, in the region $B(t_0, x_0, \varepsilon)$ the process $(v^{\eta} (t,X^{s,x}_t))_{s\leq t\leq T}$ is the minimum between two local supermartingales, therefore a local supermartingale. (It is clear that the process $(\varphi ^{\eta} (t,X^{s,x}_t))_{s\leq t\leq T}$ is a local supermartingale over $B(t_0, x_0, \varepsilon)$ by It\^o's formula.) Since the two regions $[s,T]\times \mathbb{R}^d-B(t_0, x_0, \varepsilon/2)$ and $B(t_0, x_0, \varepsilon)$ actually overlap over an open region, then we can conclude that the process  $(v^{\eta} (t,X^{s,x}_t))_{s\leq t\leq T}$  is indeed a supermartingale.
In order to make this argument, one needs to choose a double sequence of stopping times reminiscent of the optimal strategy in switching control problems. More precisely, the double sequence is chosen as the times  exiting from 
$B(t_0, x_0, \varepsilon)$ and  the  sequel times entering $B(t_0, x_0, \varepsilon/2)$. The choice depends on where the process actually is a time the initial time $s$.

In general, i.e., if the process $(v ^+(t,X^{s,x}_t))_{s\leq t\leq T}$ does not have RC paths, then we can work with its right continuous limit over rational times to reduce it to the case above. More precisely, fix $0\leq s \leq r\leq t\leq T$ and $x\in \mathbb{R}^d$. We want to prove the supermartingale property for the process $(Y_u)_{s\leq u\leq T}:=(v^{\eta} (u,X^{s,x}_u))_{s\leq u\leq T}$ between the times $r$ and $t$, which means we want to show that
\begin{equation}
\label{spermart-r-t}
Y_r\geq \mathbb{E}^{s,x}[Y_t|\mathcal{F}^{s,x}_r].
\end{equation}
First, we make the notation $Z_u:=v ^+(u,X^{s,x}_u)$ for $r\leq u\leq t$ and we stop it at time $t$, i.e.
$Z_u:=v^+ (t,X^{s,x}_t)$ for $t\leq u\leq T$. The process $(Z_u)_{r\leq u\leq T}$ is a supermartingale, but may not be RC, as discussed. We can use Proposition 3.14 page 16  in Karatzas and Shreve \cite{KS88}  to define the RC supermartingale
$$Z^+_u(\omega):=\lim _{q\rightarrow u, q>u,  q\in \mathbb{Q}}Z_q(\omega),\ \ \omega \in \Omega ^*, r\leq u\leq T,$$
and $$Z^+_{\cdot}=0,\ \ \omega \notin \Omega ^*,$$
where $\mathbb{P}^{s,x}[\Omega ^*]=1$.
We would like to emphasize that $Z^+$ is, indeed a RC supermartingale with respect to the original filtration since the filtration is assumed to satisfy the usual conditions. Since the function $v^+$ is USC, and the process is constant after $t$ we can conclude (taking path-wise limits) that 
$$Z_r\geq Z_r^+,\ \ \ Z_t=Z^+_t.$$
We recall that in the \emph{open} region $B(t_0,x_0,\varepsilon)-\overline{B(t_0,x_0,\varepsilon /2)}$
we have $v^+<\varphi -\delta$. Therefore, if we take right limits inside this region, and use the fact that $\varphi$ is continous the we get
 $$Z^+_u<\varphi ^{\eta}(u, X^{s,x}_u),\ \ \textrm{if}\ \ (u,X^{s,x}_u)\in B(t_0,x_0,\varepsilon)-\overline{B(t_0,x_0,\varepsilon /2)}.$$

Now, we can define the process
$$Y^+_u:=
\left \{
\begin{array}{l}
Z^+_u ,\ \ \ \ (u,X^{s,x}_u)\notin \overline{B(t_0,x_0,\varepsilon/2)},\\
Z^+_u \wedge \varphi ^{\eta} (u, x^{s,x}_u),\ \ \ (u,X^{s,x}_u)\in B(t_0,x_0,\varepsilon).
\end{array}\right.$$
We note that we have
$$Y_r\geq Y^+_r,\ \ \ Y_t=Y^+_t.$$
Now, for the process $Y^+$, we can apply the previous argument, since $Z^+$ has RC paths, to conclude it is a supermartingale.
In particular, we have that
$$Y_r\geq Y^+_r\geq \mathbb{E}^{s,x}[Y^+_t|\mathcal{F}^{s,x}_r]= \mathbb{E}^{s,x}[Y_t|\mathcal{F}^{s,x}_r].$$
{\bf Step 2.} \emph{The terminal condition}. Assume that, for some $x_0\in \mathbb{R}^d$ we have 
$v^+(T,x_0)>g(x_0).$
We want to use this information in a similar way to Step 1 to construct a contradiction. Since $g$ is USC on $\mathbb{R}^d$, there exists an $\varepsilon >0$ such that
$$g(x)\leq v^+(T,x_0)-\varepsilon,\ \ \ |x-x_0|\leq \varepsilon.$$
We now use the fact that $v^+$ is USC to conclude it is bounded above on the compact set
$$(\overline{B(T,x_0,\varepsilon)}-B(T,x_0,\varepsilon /2))\cap( [0,T]\times \mathbb{R}^d).$$
This was anyway clear, since actually $v^+$ is globally bounded, but the argument above shows the proof works in even more general cases.  
Now, we choose $\eta>0 $ small enough so that
\begin{equation}
\label{bound}
v^+(T,x_0)+\frac{\varepsilon^2}{4\eta} \geq \varepsilon +\sup_{(t,x)\in (\overline{B(T,x_0,\varepsilon)}-B(T,x_0,\varepsilon /2))\cap( [0,T]\times \mathbb{R}^d)} v^+(t,x).
\end{equation}
We now define, for $k>0$ the following function
$$\varphi^{\eta,\varepsilon, k}(t,x)=v^+(T,x_0)+\frac{|x-x_0|^2}{\eta}+ k(T-t). $$
For $k$ large enough we have that 
$$-\varphi^{\varepsilon, \eta,k}_t-\mathcal{L}_t \varphi ^{\varepsilon, \eta, k}>0 ,\ \ \ \textrm{on~} \overline{B(T,x_0,\varepsilon)}.$$
In addition, using \eqref{bound} we have that
$$\varphi^{\varepsilon, \eta,k}\geq \varepsilon +v^+,\textrm{~on~} (\overline{B(T,x_0,\varepsilon)}-B(T,x_0,\varepsilon /2))\cap( [0,T]\times \mathbb{R}^d).$$
Also, $\varphi^{\varepsilon, \eta,k}(T,x)\geq v^+(T,x_0)\geq g(x)+\varepsilon$ for $|x-x_0|\leq \varepsilon$. Now, we can choose $\delta <\varepsilon$ and define as in the proof of Step 1
$$v^{\varepsilon, \eta,k, \delta }=
\left \{
\begin{array}{l}
 v^+\wedge \Big ( \varphi ^{\varepsilon, \eta,k}-\delta  \Big)\textrm{~on~} \overline{ B(T, x_0, \varepsilon)},\\
v^+ \textrm{~outside~}\overline{ B(T, x_0, \varepsilon)}.
\end{array}
\right.
$$
We can now prove, using the same switching principle and RC modification argument as in Step 1 that $v^{\varepsilon, \eta,k, \delta }\in \mathcal{U}^+,$ but $v^{\varepsilon, \eta,k, \delta }(T,x_0)=v^+(T,x_0)-\delta$, leading to a contradiction.
\end{proof}

\subsection{Verification by comparison}
\begin{Definition}\label{comparison} We say that the viscosity comparison principle holds for the equation \eqref{pde} with respect to time horizon $T$ and the final condition $g$, or that  condition $CP(T,g)$ is satisfied if,
whenever we have  a bounded, upper-continuous (USC) subsolution $u$ of  
\begin{equation}\label{pde3}
\left \{
\begin{array}{l}
-u_t-L_tu\leq  0,\\
u(T,x)\leq g(x),
\end{array}\right.\end{equation}
and a bounded lower semicontinous super-solution $v$ of 
 \begin{equation}\label{pde4}
\left \{
\begin{array}{l}
-u_t-L_tu\geq 0,\\
u(T,x)\geq g(x).
\end{array}\right.\end{equation}
 then 
$u\leq v.$
\end{Definition}
Next theorem is an easy consequence of our main result, Theorem \ref{Perron}. However, it   amounts to a verification result for non-smooth viscosity  solutions of \eqref{pde}, so we consider it to be the other main result of the present note.
\begin{Theorem}\label{theorem}
Let $g$ be bounded and continous. Assume also that the comparison principle  $CP(T,g)$ is satisfied. Then  there exists a unique bounded and continuous viscosity solution $v$ to \eqref{pde} which equals both the lower and the upper  pay-offs (value functions), which means
$$v_*=v=v^*.$$
In addition,
 for each $(s,x)\in[0,T]\times \mathbb{R}^d$, and each weak solution 
  $$\Big((X^{s,x}_t)_{s\leq t\leq T}, (W^{s,x}_t)_{s\leq t\leq T}, \Omega ^{s,x}, \mathcal{F}^{s,x}, \mathbb{P}^{s,x},( \mathcal{F}^{s,x}_t) _{s\leq t\leq T}\Big)\in \mathcal{X}^{s,x},$$ 
the process
$(v(t,X^{s,x}))_{s\leq t\leq T}$ is a martingale  on $(\Omega ^{s,x}, \mathbb{P}^{s,x})$ with respect to the filtration 
$( \mathcal{F}^{s,x}_t) _{s\leq t\leq T}$.
\end{Theorem}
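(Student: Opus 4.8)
The plan is to combine the viscosity sub/supersolution properties from Theorem \ref{Perron} with the comparison principle $CP(T,g)$ to pinch the four functions $v^-\le v_*\le v^*\le v^+$ together, and then to upgrade the resulting single function from ``viscosity solution'' to ``stochastic solution'' using the definitions of $\mathcal{U}^-$ and $\mathcal{U}^+$. First I would observe that since $g$ is bounded and continuous, it is both USC and LSC, so Theorem \ref{Perron} applies in both directions: $v^+$ is a bounded USC viscosity subsolution of \eqref{pde3} and $v^-$ is a bounded LSC viscosity supersolution of \eqref{pde4} (with the terminal conditions holding as equalities, by Remark \ref{terminal-condition}). Applying $CP(T,g)$ with the subsolution $v^+$ and the supersolution $v^-$ gives $v^+\le v^-$. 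Combined with the chain $v^-\le v_*\le v^*\le v^+$ already established (Lemma \ref{lem:verification} and Remark \ref{nonempty}), all four functions coincide; call the common value $v$. Being squeezed between the LSC function $v^-$ and the USC function $v^+$ with $v^-=v^+$, $v$ is continuous, and it is simultaneously a viscosity sub- and supersolution of \eqref{pde}, hence a viscosity solution. Uniqueness of the bounded continuous viscosity solution is immediate from $CP(T,g)$: any two such solutions are each both a sub- and a supersolution, so comparison gives inequalities in both directions.

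Next I would establish the martingale property along every weak solution. Fix $(s,x)$ and a weak solution $(X^{s,x}_t)_{s\le t\le T}\in\mathcal X^{s,x}$. Since $v=v^-\in\mathcal U^-$ (Lemma \ref{closure}), the process $(v(t,X^{s,x}_t))_{s\le t\le T}$ is a submartingale with respect to $(\mathcal F^{s,x}_t)$ under $\mathbb P^{s,x}$; since $v=v^+\in\mathcal U^+$ (Lemma \ref{closure}), the same process is a supermartingale with respect to the same filtration. A process that is simultaneously a submartingale and a supermartingale is a martingale. (In particular its conditional expectations satisfy the equality $v(r,X^{s,x}_r)=\mathbb E^{s,x}[v(t,X^{s,x}_t)\mid\mathcal F^{s,x}_r]$ for all $s\le r\le t\le T$, which is precisely the assertion; note this argument does not even require right-continuity of paths, so the subtleties of Remark \ref{path-regularity} do not intervene here.)

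Finally I would record the consistency check that $v$ equals the expected payoff in the obvious way: taking $r=s$ and $t=T$ in the martingale identity and using the terminal condition $v(T,\cdot)=g$ gives $v(s,x)=\mathbb E^{s,x}[g(X^{s,x}_T)]$ for \emph{every} weak solution, which re-confirms $v=v_*=v^*$ and shows the value is independent of the choice of weak solution even when uniqueness in law fails. The only genuinely substantive input is the already-proven Theorem \ref{Perron}; given that, the main (and quite mild) obstacle in writing this up cleanly is simply being careful that ``submartingale and supermartingale implies martingale'' is invoked for the full process on $[s,T]$ under the given filtration, with integrability supplied by boundedness of $v$ (which follows from boundedness of $g$ together with Lemma \ref{lem:verification}), so that no measurability or path-regularity issue of the kind flagged in Remarks \ref{measurability} and \ref{path-regularity} actually needs to be confronted.
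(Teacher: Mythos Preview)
Your proposal is correct and follows exactly the line the paper intends: apply Theorem \ref{Perron} to get $v^+$ a USC subsolution and $v^-$ an LSC supersolution, invoke $CP(T,g)$ to force $v^+\le v^-$ and hence collapse the chain $v^-\le v_*\le v^*\le v^+$, and then use Lemma \ref{closure} (so $v=v^+\in\mathcal{U}^+$ and $v=v^-\in\mathcal{U}^-$) to conclude the martingale property. The paper's own proof is a one-line reference to precisely these ingredients, so your write-up is simply a faithful unpacking of it.
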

\begin{proof}\mbox{} The proof is immediate in light of Definition \ref{comparison}, Lemma \ref{easy-Perron} and Theorem \ref{Perron}.
\end{proof}
 \begin{Remark}\label{Markov} The martingale property for $(v(t,X^{s,x}))_{s\leq t\leq T}$ is proved without using the Markov property of the weak solution (which is  not even  assumed). 
  However, if $CP(T,g)$ is satisfied for any $T$ and any bounded  (test function) $g$, then we obtain that,
  for each $(s,x)$ and each $T$ the law of $X^{s,x}_T$ is uniquely determined.
  Following Theorem 6.2.3 in \cite{MR2190038} or Proposition 4.27 page 326 in \cite{KS88},  
  uniqueness of marginals implies   uniqueness in law of the weak solution. Now, uniqueness in law for any $(s,x)$ does imply the  Markov  property for the weak solution of the SDE \eqref{sde} (the Markov property holds with respect to the natural raw filtration though). We refer the reader to Theorem 6.2.2 in  \cite{MR2190038} or Theorem 4.20 page 322 in \cite{KS88} for the last mentioned result.
 \end{Remark}
 \begin{Remark}\label{selection} The whole paper can be rewritten by selecting, for each $(s,x)$, only one weak solution $X^{s,x}$  instead of using  the 
  set of weak solutions $\mathcal{X}^{s,x}$. 
  Such a selection uses the axiom of choice and does not need to be a Markov selection.
  Once the selection $X^{s,x}$ is chosen,  the sets of stochastic super and sub-solutions in Definitions \ref{def:supersolutions} and \ref{def:subsolutions} have to be re-defined accordingly, and there is only one pay-off (value function) $v$  replacing $v_*$ and $v^*$.
 \end{Remark}
 \section{Conclusions} We designed a stochastic counterpart to Perron's method which produces two viscosity solutions of the Feynman-Kac equation. The two solutions squeeze in between the expected pay-off, and this comparison is a trivial consequence of the probabilistic definition. If, in addition, a viscosity comparison result holds, then we do have a unique viscosity solution, which is  a martingale along the solutions of the stochastic differential equation and is equal to the expected pay-off.  In this case, we therefore have a full verification result without smoothness of the viscosity solution.
 
 While the Perron method we describe here is reminiscent of the characterization of the value function in optimal stopping problems as the least excessive function, we would like to point out that here, unlike in optimal stopping, we avoid proving that the value function is ``excessive". This is actually the point of verification by comparison, to avoid working with the value function. 
 
 One could try to prove directly, avoiding viscosity altogether, that $v^+$ and  $v^-$ along solutions of the SDE are  martingales, i.e.  they are stochastic solutions in the sense of Stroock and Varadhan \cite{Stroock-Varadhan}. However, this 
 is actually not  possible: $v^+$ and $v^-$ are stochastic solutions \emph{only} when they coincide, since the stochastic solutions are unique by definition. We can additionally justify  that, in general, 
  $v^+$ and $v^-$ are viscosity solutions but \emph{may not} be  stochastic solutions  by observing 
  that  the stochastic solution property is much stronger then viscosity. Fortunately, in a large number of situations, viscosity property is still strong enough to prove uniqueness. In this case, viscosity and stochastic solution property are equivalent.
 \section{Appendix: Countable selection to achieve the  inf/sup of a class of semi-continous functions}\label{appendix}
 The main purpose of the Appendix is to prove a countable selection argument needed in the proof of Lemma \ref{lem:verification}.  Let $(M,d)$ be a metric space and consider a class $ \mathcal{G}$  of functions $f:M\rightarrow \overline{\mathbb{R}}$. 
 The first result is 
 \begin{Lemma}\label{epigraph}Let $g:M\rightarrow \overline{\mathbb{R}}.$ Then, the following conditions are equivalent
 \begin{enumerate}
 \item $g(x)=\inf _{f\in \mathcal{G}}f(x)$, for each $x\in M$,
 \item $\{x\in M |g(x)<q\}=\cup _{f\in \mathcal{F}}\{ x\in M |f(x)<q\}$, for each $q\in \mathbb{Q}$.
 \end{enumerate}
 \end{Lemma}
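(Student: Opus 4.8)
The plan is to establish the two implications of the equivalence separately, keeping careful track of the values $\pm\infty$, since the functions take values in $\overline{\mathbb{R}}$ while the thresholds $q$ are genuine (finite) rationals.

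For $(1)\Rightarrow(2)$ I would fix $q\in\mathbb{Q}$ and verify the two inclusions. If $x$ belongs to the right-hand union, say $f(x)<q$ for some $f\in\mathcal{G}$, then $g(x)=\inf_{f'\in\mathcal{G}}f'(x)\le f(x)<q$, so $x$ lies in the left-hand set. Conversely, if $g(x)<q$ then $q$ fails to be a lower bound of $\{f(x):f\in\mathcal{G}\}$, hence some $f\in\mathcal{G}$ satisfies $f(x)<q$ and $x$ is in the union. This direction is nothing beyond the definition of the infimum together with the fact that $q$ is a real number.

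For $(2)\Rightarrow(1)$ I would introduce $h(x):=\inf_{f\in\mathcal{G}}f(x)$ and show $h\equiv g$. First, $g\le f$ pointwise for every fixed $f\in\mathcal{G}$: if $f(x)=+\infty$ there is nothing to prove, and otherwise, for every rational $q>f(x)$ condition (2) puts $x$ in $\{g<q\}$, so $g(x)<q$; since $\inf\{q\in\mathbb{Q}:q>f(x)\}=f(x)$, this gives $g(x)\le f(x)$. Taking the infimum over $f$ yields $g\le h$. For the reverse inequality I would argue by contradiction: if $g(x)<h(x)$ at some $x$, then $g(x)<+\infty$ necessarily, and by density of $\mathbb{Q}$ in $\overline{\mathbb{R}}$ there is a rational $q$ with $g(x)<q\le h(x)$; from (2) there is $f\in\mathcal{G}$ with $f(x)<q$, whence $h(x)\le f(x)<q\le h(x)$, which is absurd. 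Hence $h\le g$, and combined with $g\le h$ this is $(1)$.

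The argument is elementary, and I do not expect a genuine obstacle; the only thing demanding attention is the bookkeeping at $\pm\infty$ — verifying that ``$q$ decreasing to $f(x)$'' and ``choosing a rational between $g(x)$ and $h(x)$'' still make sense when these values are infinite — together with the observation that restricting the thresholds to rationals is harmless, since the sublevel sets $\{\,\cdot<q\,\}$, $q\in\mathbb{Q}$, already determine a function $M\to\overline{\mathbb{R}}$. This is exactly the feature exploited in the subsequent countable-selection step (and in Proposition~\ref{countable}): the countability of $\mathbb{Q}$ will let one trade the whole family $\mathcal{G}$ for a countable subfamily realizing the same pointwise infimum.
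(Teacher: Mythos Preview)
Your argument is correct. The paper itself states this lemma without proof, treating it as an elementary observation about pointwise infima and sublevel sets; your proposal supplies exactly the routine verification the authors omit. The only comparison to make is that the paper offers nothing to compare against, so your write-up --- including the careful handling of the extended-real values $\pm\infty$ and the remark that rational thresholds suffice to pin down an $\overline{\mathbb{R}}$-valued function --- is a legitimate and complete justification of the statement.
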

 \begin{Proposition}\label{countable} Assume that $(M,d)$ is a separable metric space (or, less, a topological space with a countable base). Assume also that each function in the class $\mathcal{G}$ is upper-semicontinous (USC). Then, there exists a countable subclass of functions $\mathcal{H}\subset \mathcal{G}$ such that
 $$f_*(x):=\inf _{f\in \mathcal{G}}f(x)=\inf _{f\in \mathcal{H}}f(x) ,\ \ 
 \textrm{for~each~} x\in M.$$ \end{Proposition}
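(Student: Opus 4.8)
The plan is to prove Proposition \ref{countable} by reducing the selection problem to the combinatorial characterization provided by Lemma \ref{epigraph} and then exploiting the countable base of $M$. First I would invoke Lemma \ref{epigraph}: writing $f_*(x)=\inf_{f\in\mathcal{G}}f(x)$, for every rational $q$ the sublevel set $\{x\in M : f_*(x)<q\}$ equals the union $\bigcup_{f\in\mathcal{G}}\{x\in M : f(x)<q\}$. Crucially, since each $f\in\mathcal{G}$ is USC, each set $U_{f,q}:=\{x : f(x)<q\}$ is open. So for each fixed $q\in\mathbb{Q}$ we have an open cover $\{U_{f,q}\}_{f\in\mathcal{G}}$ of the open set $V_q:=\{x:f_*(x)<q\}$.

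The key step is a Lindel\"of-type argument. Since $M$ has a countable base, every subspace of $M$ is Lindel\"of, so the open cover $\{U_{f,q}\}_{f\in\mathcal{G}}$ of $V_q$ admits a countable subcover: there is a countable set $\mathcal{H}_q\subset\mathcal{G}$ with $V_q=\bigcup_{f\in\mathcal{H}_q}U_{f,q}$. (Alternatively, without invoking subspace Lindel\"ofness directly, one can note that each $U_{f,q}$ is a union of basic open sets; collecting, over all $f$, one basic open set $B\subseteq U_{f,q}$ per basic set $B$ that is contained in some $U_{f,q}$, one picks at most countably many $f$'s, since the base is countable — this reproves the subcover statement.) Then I would set $\mathcal{H}:=\bigcup_{q\in\mathbb{Q}}\mathcal{H}_q$, which is a countable union of countable sets, hence countable, and $\mathcal{H}\subset\mathcal{G}$.

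Finally I would check $\inf_{f\in\mathcal{H}}f=f_*$. The inequality $\inf_{f\in\mathcal{H}}f\geq f_*$ is immediate since $\mathcal{H}\subseteq\mathcal{G}$. For the reverse, fix $x\in M$ and $q\in\mathbb{Q}$ with $f_*(x)<q$; then $x\in V_q=\bigcup_{f\in\mathcal{H}_q}U_{f,q}$, so $f(x)<q$ for some $f\in\mathcal{H}_q\subseteq\mathcal{H}$, whence $\inf_{f\in\mathcal{H}}f(x)<q$. Letting $q\downarrow f_*(x)$ through the rationals gives $\inf_{f\in\mathcal{H}}f(x)\leq f_*(x)$. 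This uses the density of $\mathbb{Q}$ in $\overline{\mathbb{R}}$ in the obvious sense (including the case $f_*(x)=-\infty$, where any rational $q$ works). Combining the two inequalities yields the claim, and one observes that $\mathcal{H}$ works simultaneously for all $x$, which is exactly the point of the proposition.

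The main obstacle — really the only subtle point — is justifying the extraction of a countable subcover from the possibly uncountable family $\mathcal{G}$; this is where separability (countable base) is used essentially, and it is worth being careful that one does not need any choice principle beyond the countable one implicit in enumerating the base. The verification of the infimum identity afterward is routine. I would also remark that the USC hypothesis is used only to make the sublevel sets open; the symmetric statement for suprema of LSC functions (needed for $v^-$) follows by applying the result to $\{-f : f\in\mathcal{G}\}$.
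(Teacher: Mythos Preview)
Your proof is correct and follows essentially the same approach as the paper: invoke Lemma \ref{epigraph} to express each rational sublevel set of $f_*$ as a union of the open sets $\{f<q\}$, extract a countable subcover $\mathcal{H}_q$ using the countable base (Lindel\"of property), and take $\mathcal{H}=\bigcup_{q\in\mathbb{Q}}\mathcal{H}_q$. The only cosmetic difference is that the paper closes by re-applying Lemma \ref{epigraph} in the reverse direction to the family $\mathcal{H}$, whereas you verify the infimum identity directly by letting $q\downarrow f_*(x)$; these are equivalent.
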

\begin{proof}\mbox{} Fix a $q\in \mathbb{Q}.$ According to Lemma \ref{epigraph}, the \emph{open} set
$\{x\in M|f_*(x)<q\}$ admits an \emph{open} cover as  
$$\{x\in M|f_*(x)<q\}=\cup _{f\in \mathcal{G}}\{ x\in M |f(x)<q\}.$$
Since the space $(M,d)$ is separable, so it admits a countable basis, one can select a \emph{countable} open sub-cover. 
%\footnote{Some details may be needed here.}
More precisely, there exists a \emph{countable} $\mathcal{G}_q\subset \mathcal{G}$ such that
$$\{x\in M |f_*(x)<q\}=\cup _{f\in \mathcal{G}_q}\{ x\in M |f(x)<q\}.$$
Now, we define the countable class
$$\mathcal{H}:=\cup _{q\in \mathbb{Q}}\mathcal{G}_q.$$
We have, for each $q\in \mathbb{Q}$ that
\[
\begin{split}
 \{x\in M |f_*(x)<q\}&=\cup _{f\in \mathcal{G}_q}\{ x\in M |f(x)<q\}\subset 
\cup _{f\in \mathcal{H}}\{ x\in M |f(x)<q\}
 \\ 
&\subset  \{x\in M |f_*(x)<q\}.
\end{split}
\]
According to Lemma \ref{epigraph} we then have that 
$$f_*(x)=\inf _{f\in \mathcal{H}}f(x),\ \ \ \textrm{for each}\ x\in M.$$
\end{proof}
\begin{Remark}\label{axiomofchoice} We would like to point out that in the argument of selecting a countable open sub-cover the axiom of choice is used.
\end{Remark}
\bibliographystyle{amsplain}

\bibliography{references} %The files containing all the articles and books you ever referenced. 

\providecommand{\bysame}{\leavevmode\hbox to3em{\hrulefill}\thinspace}
\providecommand{\MR}{\relax\ifhmode\unskip\space\fi MR }
% \MRhref is called by the amsart/book/proc definition of \MR.
\providecommand{\MRhref}[2]{%
  \href{http://www.ams.org/mathscinet-getitem?mr=#1}{#2}
}
\providecommand{\href}[2]{#2}
\begin{thebibliography}{1}

\bibitem{CIL}
M.~Crandall, H.~Ishii, and P.-L. Lions, \emph{User's guide to viscosity
  solutions of second-order partial differential equations}, Bull. Amer. Math.
  Soc \textbf{27} (1992), 1--67.

\bibitem{ishii}
H.~Ishii, \emph{Perron's method for {Hamilton-Jacobi} equations}, Duke
  Mathematical Journal \textbf{55} (1987), no.~2, 369--384.

\bibitem{KS88}
I.~Karatzas and S.~Shreve, \emph{Brownian motion and stochastic calculus},
  Springer New York, 1988.

\bibitem{MR2190038}
D.~W. Stroock and S.~R.~S. Varadhan, \emph{Multidimensional diffusion
  processes}, Classics in Mathematics, Springer-Verlag, Berlin, 2006, Reprint
  of the 1997 edition.

\bibitem{Stroock-Varadhan}
D.W. Stroock and S.R.S. Varadhan, \emph{On degenerate elliptic-parabolic
  operators of second order and their associated diffusions}, Communications on
  Pure and Applied Mathematics \textbf{25} (1972), 651--713.

\end{thebibliography}

\end{document}